  \theoremstyle{definition}
  \theoremstyle{plain}
  \theoremstyle{definition}
  \newtheorem{example}{\protect\examplename}[section]
   \newenvironment{proof}[1][\proofname]{\par
     \normalfont\topsep6\p@\@plus6\p@\relax
     \trivlist
     \itemindent\parindent
     \item[\hskip\labelsep
           \scshape
       #1]\ignorespaces
   }{%
     \endtrivlist\@endpefalse
   }
   \providecommand{\proofname}{Proof}
  \theoremstyle{plain}
  \newtheorem{lem}{\protect\lemmaname}[section]
  \theoremstyle{remark}
  \newtheorem{rem}{\protect\remarkname}[section]
  \theoremstyle{plain}
  \newtheorem{thm}{\protect\theoremname}[section]
  \theoremstyle{plain}
  \newtheorem{cor}{\protect\corollaryname}[section]
\providecommand{\corollaryname}{Corollary}
\providecommand{\definitionname}{Definition}
\providecommand{\examplename}{Example}
\providecommand{\lemmaname}{Lemma}
\providecommand{\propositionname}{Proposition}
\providecommand{\remarkname}{Remark}
\providecommand{\theoremname}{Theorem}
\newenvironment{proofofof}[1]{\noindent{\bf Proof of Theorem #1. }}{ \qed }
\journal{Statistics and Probability Letters}
\begin{document}

\begin{frontmatter}

\title{Decomposing Correlated Random Walks on Common and Counter Movements}


\author[mymainaddress]{Tianyao Chen}

\author[mymainaddress]{Xue Cheng\corref{mycorrespondingauthor}}
\cortext[mycorrespondingauthor]{Corresponding author}
\ead{chengxue@pku.edu.cn}
\author[mymainaddress]{Jingping Yang}

\address[mymainaddress]{LMEQF, Department of Financial Mathematics, School of Mathematical Scienses, Peking University, Beijing 100871, China.}


\begin{abstract}
Random walk is one of the most classical and well-studied model in probability theory. For two correlated random walks on lattice, every step of the random walks has only two states, moving in the same direction or moving in the opposite direction. This paper presents a decomposition method to study the dependency structure of the two correlated random walks. By applying change-of-time technique used in continuous time martingales (see for example \cite{karatzas2012brownian} for more details), the random walks are decomposed into the composition of two independent random walks $X$ and $Y$ with change-of-time $T$, where $X$ and $Y$ model the common movements and the counter movements of the correlated random walks respectively. Moreover, we give a sufficient and necessary condition for mutual independence of $X$, $Y$ and $T$.
\end{abstract}

\begin{keyword}
correlated random walks, change-of-time, common movement, counter movement
\end{keyword}

\end{frontmatter}


\section{Introduction}\label{introduction}


Temporal correlated random walks have been widely considered. \cite{gillis1955correlated} and \cite{renshaw1981correlated} studied random walks on a $d$-dimensional lattice such that, at each step, the distribution depends on the state of previous step. \cite{chen1994general} considered a general correlated random walk as a Markov chain which includes a large number of examples as special cases. On the other hand, behavior between two simple independent random walks on graph were studied in \cite{krishnapur2004recurrent} and \cite{barlow2012collisions}. Given two independent random walks $S^1$ and $S^2$, \cite{ackermann2004independence} searched for stopping times $\tau$ such that $S^1_{\tau}$ and $S^2_{\tau}$ are independent. However, as far as we know, spatial correlation between two random walks have not been studied yet.

Consider two random walks, $\{B_n,n\geq1\}$ and $\{W_n,n\geq1\}$, on lattice. Let $(\xi_n,\eta_n)=(B_n-B_{n-1},W_n-W_{n-1})$, then $\xi_n,\eta_n\in\{1,-1\}$ satisfy that
$$\xi_n=\eta_n\quad or \quad\xi_n=-\eta_n,$$
i.e., there are two possible movements for every step of $(B,W)$, common movement or counter movement. Define
$$Q_n=\begin{cases}1,&\quad\mbox{if }\xi_n=\eta_n, \\0,&\quad\mbox{if }\xi_n=-\eta_n,\end{cases}$$
which can be considered as a state process specifying the common and counter movements of $B$ and $W$.

First we introduce some notations. For each $n\ge1$: \begin{description}
             \item[$T_n\triangleq\sum_{k=1}^nQ_k$,] number of common movements till step $n$; similarly we can define the number of counter movements $S_n\triangleq\sum_{k=1}^n(1-Q_k)=n-T_n$.
             \item[$\alpha_n\triangleq\inf\{k:T_k=n\}$,] total number of steps when $B$ and $W$ have got $n$ common moves, where we define $\inf\emptyset=\infty$.
Similarly we can define $\beta_n\triangleq\inf\{k:S_k=n\}$. 
             \item[$X_n\triangleq\sum_{k=1}^{\alpha_n}\xi_kQ_k$,] sum of the first $n$ common movements when $\alpha_n<\infty$; similarly we can define $Y_n\triangleq\sum_{k=1}^{\beta_n}\xi_k(1-Q_k)$ when $\beta_n<\infty$.
           \end{description}

According to these definitions, it is easy to obtain what we shall call the \emph{common decomposition of random walks} $B\ \mbox{and}\ W$,
\begin{equation}\label{common-decom}
  B_n=X_{T_n}+Y_{S_n},\ W_n=X_{T_n}-Y_{S_n}.
\end{equation}

The following example shows a scenario of relationship among these processes.
\begin{example} A sample path of $B_n$ and $W_n$ is shown in Figure \ref{BWXY}. The values of $B_n$, $W_n$,$T_n$,$S_n$,$\alpha_n$,$\beta_n$,$X_n$ and $Y_n$ are given in Table \ref{value}. The blank of the table means that we can not get the value from Figure \ref{BWXY}.
\begin{figure}[htbp]
\centering
\includegraphics[width=15cm,height=6cm]{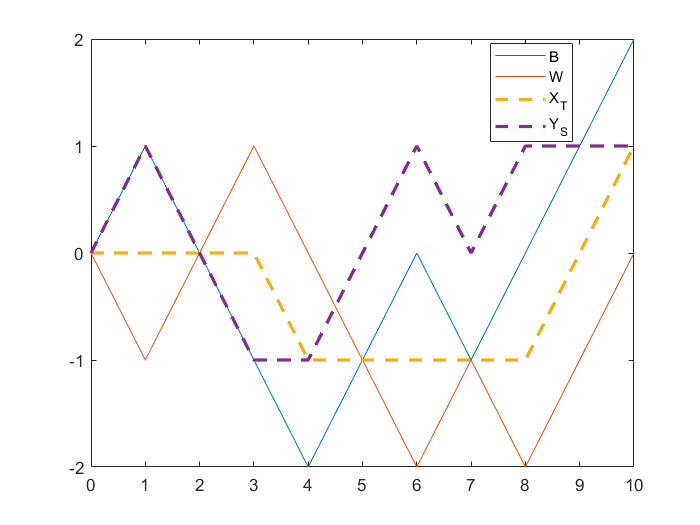}
\caption{A sample of $B_n$,$W_n$,$X_{T_n}$ and $Y_{S_n}$}
\label{BWXY}
\end{figure}
\begin{table}[htbp]
\centering
\small
\caption{A sample of underlying stochastic processes}
\label{value}
\begin{tabular}{ccccccccccc}
\hline
          &$n=1$&$n=2$&$n=3$&$n=4$&$n=5$&$n=6$&$n=7$&$n=8$&$n=9$&$n=10$\\
\hline
$B_n$     &1    &0    &-1   &-2   &-1   &0    &-1   &0    &1    &2  \\
$W_n$     &-1   &0    &1    &0    &-1   &-2   &-1   &-2   &-1   &0  \\
\hline
$T_n$     &0    &0    &0    &1    &1    &1    &1    &1    &2    &3    \\
$S_n$     &1    &2    &3    &3    &4    &5    &6    &7    &7    &7\\
$\alpha_n$&4    &9    &10   &     &     &     &     &     &     &\\
$\beta_n$ &1    &2    &3    &5    &6    &7    &8    &     &     &     \\
$X_n$     &-1   &0    &1    &     &     &     &     &     &     &\\
$Y_n$     &1    &0    &-1   &0    &1    &0    &1    &     &     &\\
\hline
\end{tabular}
\begin{tablenotes}
\item \hspace*{2.6cm}\emph{Blanks of table represent we need further information of $B_n$ and $W_n$ to confirm the values.}
\end{tablenotes}
\end{table}
\end{example}

The decomposition (\ref{common-decom}) states that the dependency structure of $B$ and $W$ can be described by the processes $\{X_n,n\geq 1\},\{Y_n,n\geq 1\}$ and $\{T_n,n\geq 1\}$. In this paper, we will consider some properties of the three random processes, especially the independence of the processes. Note that the decomposition \eqref{common-decom} can be regarded as a discrete-time regime switching model. Since random walk is one of the most well-studied and well-applied topics in probability theory (see \cite{spitzer2013principles},\cite{lawler2010random}), this decomposition may have applications in various subjects. In finance, a random walk can be used as the sign process of some special sequences, like the discrete asset price sequence (after multiplying the volatility) in Bachelier model \cite{bachelier1900theorie}, thus our common  decomposition method can be used to study the trend information contained in the sequence.


This paper is organized as follows. In Section 2, we set up the model concretely and discuss the independence property of $X$,$Y$ and $T$. A specific example picked from finance is also considered in this section to show applications of our method. In Section 3, we give a conclusion.

\section{Main Results}\label{regime switch and random walk}

\subsection{Model Setup}
Consider a filtered probability space in discrete time $\left(\Omega, \mathcal F, \mathbb F=\{\mathcal F_n\}_{n\in \mathbb Z}, P\right)$,
where $\mathcal F_0=\{\Omega,\emptyset\}$. As introduced before, for two standard random walks $\{B_n,n\geq 1\}$ and $\{W_n,n\geq 1\}$ with respect to $\mathbb{F}$, we write $\{\xi_n\}$ and $\{\eta_n\}$ as their increments respectively,
$$\xi_n=B_n-B_{n-1},\ \eta_n =W_n-W_{n-1},$$
where $B_0=W_0=0$. We assume that for each $n\geq 1$,
\begin{equation}\label{self-con-proba}
  P(\xi_n=1|\mathcal{F}_{n-1})=P(\xi_n=-1|\mathcal{F}_{n-1})
  =P(\eta_n=1|\mathcal{F}_{n-1})=P(\eta_n=-1|\mathcal{F}_{n-1})=\frac12.
\end{equation}
Thus, $\xi_n$ is independent of  $\mathcal{F}_{n-1}$, so is $\eta_n$. Namely, the independent-increments property of $B$ and $W$ are still hold, so we restrict our attention to the correlation of $\xi$ and $\eta$, but not to the autocorrelation of each sequence. However, we can not say that $(\xi_n,\eta_n)$ is independent of $\mathcal{F}_{n-1}$. One aim of this article is to study the property of $(\xi_n,\eta_n)$.

Let $\vartheta_n$, a $\mathcal F_{n-1}$ measurable random variable, denote the probability that $B$ and $W$ increase together in the $n$th step conditional on the information till $n-1$, i.e., $\label{increment probability1}\vartheta_n=P(\xi_n=1,\eta_n=1|\mathcal{F}_{n-1}).$
Then the distribution of $\{\xi_j, \eta_j, 1\leq j\leq n\}$ is given accordingly. 

By immediate calculation, we have
\begin{equation}\label{increment probability2} P(\xi_n=-1,\eta_n=1|\mathcal{F}_{n-1})=\frac12-\vartheta_n,P(\xi_n=1,\eta_n=-1|\mathcal{F}_{n-1})=\frac12-\vartheta_n,\end{equation}
and
\begin{equation}\label{increment probability3}P(\xi_n=-1,\eta_n=-1|\mathcal{F}_{n-1})=\vartheta_n.\end{equation}


%
We first investigate the properties of $X$ and $Y$. Note that when $\alpha_n<\infty$ and $\beta_n<\infty$,
$$X_n=\sum_{k=1}^{\alpha_n}\xi_kQ_k=\sum_{i=1}^{n}\xi_{\alpha_i},\quad Y_n=\sum_{k=1}^{\beta_n}\xi_k(1-Q_k)=\sum_{i=1}^{n}\xi_{\beta_i}.$$ However, when $\alpha_n=\infty$ or $\beta_m=\infty$, neither $\xi_{\alpha_n},\xi_{\beta_m}$ nor $X_n,Y_n$ has been well-defined. Therefore, proper adjustments are needed. Thus we introduce two i.i.d. sequences $\{\zeta_n\}$ and $\{\psi_n\}$ with  $P(\zeta_n=1)=P(\zeta_n=-1)=P(\psi_n=1)=P(\psi_n=-1)=\frac12,n\geq 1$, and assume that $\{\zeta_n\}$,$\{\psi_n\}$ and $\mathcal{F}_{\infty}$ are mutually independent. We modify $\xi_{\alpha_n}$ and $\xi_{\beta_m}$ as
\begin{equation}\label{def tilde xi}
 {\tilde\xi_{\alpha_n}=}
\begin{cases}
\xi_i,&\mbox{if } \alpha_n=i\\
\zeta_n,&\mbox{if } \alpha_n=\infty
\end{cases} ,\ \
\tilde\xi_{\beta_m}=
\begin{cases}\xi_i, &\mbox{if } \beta_m=i\\\psi_m, &\mbox{if } \beta_m=\infty\end{cases}.
\end{equation}

For the cases where $\alpha_n$ and $\beta_m$ could be infinite, a natural complementary definition of $X_n$ and $Y_n$ is
$$X_n\triangleq\sum_{i=1}^{n}\tilde\xi_{\alpha_i},\quad Y_n\triangleq\sum_{i=1}^{n}\tilde\xi_{\beta_i}.$$
It is notable $\xi_{\alpha_n}$ (resp. $\xi_{\beta_n}$) represents the $n$th common (resp. counter) movements of $B$ and $W$. But when $\alpha_n=\infty$ (resp. $\beta_n=\infty$), we have $T_i<n$ for any $i\geq1$ (resp. $S_i<n$,for any $i\geq1$), in other words, there are no more than $n$ common (resp. counter) movements during the whole time period. Hence, modifying the definition of $\xi_{\alpha_n}$ (resp. $\xi_{\beta_n}$) when $\alpha_n=\infty$ (resp. $\beta_n=\infty$) will not affect the common decomposition \eqref{common-decom} and $X_n$ (resp. $Y_n$) still characterize the common (resp. counter) movements of $B$ and $W$.


\subsection{Independency Property of $X$ and $Y$}
We can now formulate our first main result.

\begin{thm}\label{independent random walks}Suppose that \eqref{self-con-proba} holds. For $\{\tilde{\xi}_{\alpha_n},n\geq1\}$,$\{\tilde\xi_{\beta_n},n\geq1\}$ as defined in \eqref{def tilde xi}, $\{\tilde{\xi}_{\alpha_n},\tilde\xi_{\beta_n},n\geq1\}$ are i.i.d. random variables with the distribution $P(\tilde\xi_{\alpha_n}=1)=P(\tilde\xi_{\alpha_n}=-1)=P(\tilde\xi_{\beta_n}=1)=P(\tilde\xi_{\beta_n}=-1)=\frac12$. As a special case, if
\begin{equation}\label{limit-assumption}
 P(\lim_{n\to\infty}T_n=\infty)=P(\lim_{n\to\infty}S_n=\infty)=1,
\end{equation}
then $\tilde\xi_{\alpha_n}=\xi_{\alpha_n}, \tilde\xi_{\beta_n}=\xi_{\beta_n}, n\geq 1$, and $\{\xi_{\alpha_n},\xi_{\beta_n},n\geq1\}$ are i.i.d. random variables.
\end{thm}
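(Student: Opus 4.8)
The plan is to reduce the whole statement to one structural fact: at every step the increment $\xi_n$ is a fresh fair sign even once the type $Q_n$ of the $n$-th move is revealed. First I would combine \eqref{self-con-proba}, \eqref{increment probability2} and \eqref{increment probability3} to get, for every $n$ and every $q\in\{0,1\}$,
\[
P(\xi_n=1,\,Q_n=q\mid\mathcal F_{n-1})=P(\xi_n=-1,\,Q_n=q\mid\mathcal F_{n-1})=\tfrac12\,P(Q_n=q\mid\mathcal F_{n-1})
\]
(both sides equal $\vartheta_n$ if $q=1$ and $\tfrac12-\vartheta_n$ if $q=0$). A $\pi$-system argument then gives $P(\xi_n=\pm1\mid\mathcal J_n)=\tfrac12$ with $\mathcal J_n:=\mathcal F_{n-1}\vee\sigma(Q_n)$, i.e.\ $\xi_n$ is uniform on $\{-1,1\}$ and independent of $\mathcal J_n$; and since $\{\zeta_k\},\{\psi_k\}$ are independent of $\mathcal F_\infty\supseteq\mathcal J_n\vee\sigma(\xi_n)$, in fact $\xi_n$ is uniform and independent of $\mathcal J_n\vee\sigma(\zeta_k,\psi_k:k\ge1)$. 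This is the only place the hypothesis \eqref{self-con-proba} is used.

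Next I would fix the bookkeeping. Route the $n$-th increment to the slot $D_n:=(X,T_n)$ if $Q_n=1$ and $D_n:=(Y,S_n)$ if $Q_n=0$. Since $T_n,S_n$ are $\sigma(Q_1,\dots,Q_n)$-measurable, $D_n$ is $\mathcal J_n$-measurable, and $n\mapsto D_n$ is injective because $T_\cdot$ strictly increases along common steps and $S_\cdot$ along counter steps. By construction $\tilde\xi_{\alpha_i}=\xi_\nu$ on $\{D_\nu=(X,i)\}$ and $\tilde\xi_{\alpha_i}=\zeta_i$ on the complementary event $\{\alpha_i=\infty\}$, and likewise for $\tilde\xi_{\beta_j}$ and $\psi_j$.

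The core is an induction on $n$. For $n\ge0$ and a slot $d$, let $\widehat V_n(d):=\xi_\nu$ if $D_\nu=d$ for some $\nu\le n$, and otherwise let $\widehat V_n(d)$ be the auxiliary variable attached to $d$ ($\zeta_i$ if $d=(X,i)$, $\psi_j$ if $d=(Y,j)$); the claim is that $\{\widehat V_n(d)\}_{d}$ are i.i.d.\ uniform on $\{-1,1\}$. For $n=0$ this is the defining property of $\{\zeta_k\},\{\psi_k\}$. For $n-1\to n$, note that $\widehat V_n$ and $\widehat V_{n-1}$ coincide off the slot $D_n$, which by injectivity is unfilled at time $n-1$; hence $\widehat V_{n-1}(D_n)$ is the (still fresh) auxiliary attached to $D_n$ whereas $\widehat V_n(D_n)=\xi_n$. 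For distinct slots $d_1,\dots,d_m$ and signs $\epsilon_1,\dots,\epsilon_m$, split $P(\widehat V_n(d_r)=\epsilon_r\ \forall r)$ by whether $D_n\notin\{d_1,\dots,d_m\}$ or $D_n=d_{r_0}$. On the first event $\widehat V_n=\widehat V_{n-1}$ at every $d_r$. On $\{D_n=d_{r_0}\}$, the variable $\xi_n=\widehat V_n(d_{r_0})$ is uniform and independent of $\big(D_n,(\widehat V_{n-1}(d_r))_{r}\big)$, which is $\mathcal J_n\vee\sigma(\zeta_k,\psi_k:k)$-measurable; and $\widehat V_{n-1}(d_{r_0})$, being the auxiliary at $d_{r_0}$ on that event, is uniform and independent of $\big(D_n,(\widehat V_{n-1}(d_r))_{r\ne r_0}\big)$. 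Feeding both facts into the split and invoking the inductive hypothesis yields $P(\widehat V_n(d_r)=\epsilon_r\ \forall r)=P(\widehat V_{n-1}(d_r)=\epsilon_r\ \forall r)=2^{-m}$, completing the induction. Finally, for each slot $d=(X,i)$ (resp.\ $(Y,j)$) one has $\widehat V_n(d)=\tilde\xi_{\alpha_i}$ (resp.\ $\tilde\xi_{\beta_j}$) for all $n$ large enough — identically if the slot is never reached, and for $n\ge\alpha_i$ (resp.\ $n\ge\beta_j$) otherwise — so every finite subfamily of $\{\tilde\xi_{\alpha_i},\tilde\xi_{\beta_j}\}$ is an almost sure limit of i.i.d.\ uniform vectors and is therefore i.i.d.\ uniform, which is the first assertion. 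The special case is then immediate: \eqref{limit-assumption} forces $P(\alpha_n<\infty)=P(\beta_n<\infty)=1$ for every $n$, so $\tilde\xi_{\alpha_n}=\xi_{\alpha_n}$ and $\tilde\xi_{\beta_n}=\xi_{\beta_n}$ almost surely.

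I expect the genuine difficulty to be keeping this induction honest. The increments $\{\xi_n\}$ are \emph{not} independent of the type process $\{Q_n\}$ — the conditional law of $Q_n$ can depend on $\xi_1,\dots,\xi_{n-1}$ through $\vartheta_n$ — so one cannot argue ``i.i.d.\ signs dropped into a random permutation of the slots''. What makes the step go through is running over the steps one at a time and using two freshness properties simultaneously: $\xi_n$ is uniform given everything that fixes its destination, and a not-yet-reached slot currently holds an equally fresh auxiliary sign, so overwriting that auxiliary by $\xi_n$ does not disturb the joint law. Turning this into the clean cancellation above — and correctly handling the never-reached slots through the independence of $\{\zeta_k\},\{\psi_k\}$ from $\mathcal F_\infty$ — is where the care lies.
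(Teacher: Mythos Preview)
Your proof is correct and takes a genuinely different route from the paper's.

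The paper inducts on the \emph{number} $M=k+l$ of variables in the target joint law. Its key step (Lemma~\ref{two parts}) partitions $\Omega$ according to whether $\alpha_{n_k}>\beta_{m_l}$ or $\alpha_{n_k}<\beta_{m_l}$, and on each piece peels off the outermost stopping time by conditioning on $\mathcal F_{h-1}$ at the (finite) value $h$ of that stopping time and invoking $P(\xi_h=\eta_h=x\mid\mathcal F_{h-1})=P(\xi_h=\eta_h=-x\mid\mathcal F_{h-1})$; a short linear-algebra trick (the quantities $a$ and $b$ in \eqref{def a}--\eqref{def b}) then closes the recursion down to $M-1$.

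You instead induct on \emph{time} $n$ and track the whole slot array $\{\widehat V_n(d)\}_d$, arguing that its law is invariant step by step: at time $n$ you overwrite the auxiliary currently sitting in slot $D_n$ --- a fresh fair coin independent of everything relevant --- by $\xi_n$, which is another fresh fair coin independent of $\mathcal J_n\vee\sigma(\zeta_\cdot,\psi_\cdot)$, so the replacement is distributionally null. This gives a clean coupling-style explanation and treats $\alpha$-slots and $\beta$-slots completely symmetrically, with no case split on which stopping time is largest and no separate handling of $\{\alpha_{n_k}=\infty\}$ versus $\{\alpha_{n_k}<\infty\}$ in the inductive step. The paper's route, by contrast, avoids introducing the auxiliary array $\widehat V_n$ and stays closer to the stopping-time structure; the conditioning-on-$\mathcal F_{h-1}$ manoeuvre it isolates is essentially the same machinery reused later in the proof of Theorem~\ref{XYT independent}. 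Both arguments ultimately rest on the single identity $P(\xi_n=x,\,Q_n=q\mid\mathcal F_{n-1})=P(\xi_n=-x,\,Q_n=q\mid\mathcal F_{n-1})$, which is precisely the first display in your proposal.
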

%

Before proving the theorem, we first state some lemmas.

\begin{lem}\label{property of alpha&beta}Suppose that \eqref{self-con-proba} holds. We have
\begin{description}
\item[1)] For $n,m\geq 1$, $\alpha_n\geq n$ and $\beta_m\geq m$;
\item[2)] Given $n_2>n_1\geq1$, if $\alpha_{n_1}<\infty$(resp. $\beta_{n_1}<\infty$), then $\alpha_{n_1}<\alpha_{n_2}$(resp. $\beta_{n_1}<\beta_{n_2}$). On the contrary, for any $\alpha_{n_1}<\alpha_{n_2}$(resp. $\beta_{n_1}<\beta_{n_2}$), we must have $n_1<n_2$. Hence both $\{\alpha_n\}$ and $\{\beta_n\}$ are increasing sequences of stopping times.
\item[3)] For each fixed $n,m\geq 1$, $\{\alpha_n=\beta_m<\infty\}\bigcup\{\alpha_n=\infty,\beta_m=\infty\}=\emptyset.\label{infty}$

\end{description}
\end{lem}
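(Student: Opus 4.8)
The plan is to reduce everything to elementary bookkeeping for the process $T_k=\sum_{j=1}^k Q_j$, using only that it is nondecreasing with increments $Q_j=\mathbf{1}\{\xi_j=\eta_j\}\in\{0,1\}$, starts at $T_0=0$, is adapted to $\mathbb F$ (each $Q_j$ is $\mathcal F_j$-measurable, being a function of $\xi_j,\eta_j$), and satisfies $S_k=k-T_k$; assumption \eqref{self-con-proba} enters only through $\xi_n,\eta_n\in\{1,-1\}$ and measurability. For part 1), since each $Q_j\le 1$ we get $T_k\le k$, so $T_k=n$ forces $k\ge n$ and hence $\alpha_n=\inf\{k:T_k=n\}\ge n$; symmetrically $S_k\le k$ gives $\beta_m\ge m$.

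For part 2), the core observation is that a nondecreasing integer-valued process with unit increments starting at $0$ attains a level $n_2$ only after passing through every lower level, so $\alpha_n$ is the first-passage time of $T$ to level $n$; in particular $\alpha_n<\infty$ implies $T_{\alpha_n-1}=n-1$, $T_{\alpha_n}=n$, and $Q_{\alpha_n}=1$. Consequently, for $n_1<n_2$: if $\alpha_{n_2}<\infty$ then $\alpha_{n_1}\le\alpha_{n_2}$, and the inequality is strict because $T_{\alpha_{n_1}}=n_1\ne n_2=T_{\alpha_{n_2}}$; if $\alpha_{n_2}=\infty$ while $\alpha_{n_1}<\infty$ the strict inequality is trivial. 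This gives the forward implication (and that $\{\alpha_n\}$ is nondecreasing, strictly increasing while finite). For the converse, if $\alpha_{n_1}<\alpha_{n_2}$ then necessarily $\alpha_{n_1}<\infty$; $n_1=n_2$ would give $\alpha_{n_1}=\alpha_{n_2}$, and $n_1>n_2$ would give $\alpha_{n_2}<\alpha_{n_1}$ by the forward implication applied to $n_2<n_1$ (here $\alpha_{n_1}<\infty$ forces $T$ to reach $n_1$, hence $n_2$, so $\alpha_{n_2}<\infty$) — both contradict $\alpha_{n_1}<\alpha_{n_2}$; hence $n_1<n_2$. The identical argument with $S$ in place of $T$ settles $\{\beta_n\}$. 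Finally $\{\alpha_n\le k\}=\{T_k\ge n\}\in\mathcal F_k$, so $\alpha_n$ (and likewise $\beta_m$) is a stopping time.

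For part 3), I argue by contradiction on each event. On $\{\alpha_n=\beta_m=k<\infty\}$: the first-passage property applied to $T$ gives $Q_k=1$ from $\alpha_n=k$, while the same property applied to $S$ gives $S_k-S_{k-1}=1-Q_k=1$, i.e. $Q_k=0$ from $\beta_m=k$ — impossible. On $\{\alpha_n=\infty,\ \beta_m=\infty\}$: $\alpha_n=\infty$ means $T$ never attains $n$, hence, by unit increments from $0$, $T_k\le n-1$ for every $k$; likewise $S_k\le m-1$ for every $k$; then $k=T_k+S_k\le n+m-2$ for all $k$, which fails at $k=n+m-1$. So both events, and hence their union, are empty.

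The whole argument is routine; the only place needing a little care is the $\infty$-bookkeeping in part 2), in particular phrasing and proving the converse implication for the possibly infinite index values and reducing it cleanly to the forward direction. I do not expect a substantive obstacle.
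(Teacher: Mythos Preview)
Your proposal is correct and follows essentially the same approach as the paper: parts 1) and 2) are dismissed there as ``straightforward by definition'' (you have simply written out the elementary first-passage and stopping-time details), and for part 3) the paper argues exactly as you do, using $Q_k=1$ versus $Q_k=0$ to rule out $\alpha_n=\beta_m=k<\infty$ and the identity $T_k+S_k=k$ to rule out $\alpha_n=\beta_m=\infty$.
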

\begin{proof}
\begin{description}
The proofs of 1) and 2) are straightforward by definition, here we only give the proof of 3).

Suppose that $\alpha_n=\beta_m=k<\infty.$ By definition
$$\{\alpha_n=k\}=\{T_{k-1}=n-1,\xi_k=\eta_k\},\{\beta_m=k\}=\{S_{k-1}=m-1,\xi_k=-\eta_k\},$$
thus $\{\alpha_n=k\}\bigcap\{\beta_m=k\}=\emptyset$, contrary to the assumption $\alpha_n=\beta_m=k$.

If $\alpha_n=\infty$ and $\beta_m=\infty,$ then $\inf\{k:T_k=n\}=\inf\{k:S_k=m\}=\infty$. Hence $T_k\leq n,S_k\leq m, k\in\mathbb{N}$, which is contrary to that $T_k+S_k=k$ when $k>m+n$.


\end{description}
\end{proof}

The following lemma provides the distributions of $\tilde \xi_{\alpha_n}$ and $\tilde\xi_{\beta_m}$, $n\geq1,m\geq1$.

\begin{lem}
\label{BIN}
Under the assumptions of Theorem \ref{independent random walks}, for any $n\geq1,m\geq1$, we have
\begin{equation}\label{distr-single}
 P(\tilde \xi_{\alpha_n}=1)=P(\tilde \xi_{\alpha_n}=-1)=P(\tilde \xi_{\beta_m}=1)=P(\tilde \xi_{\beta_m}=-1)=\frac12.
\end{equation}
\end{lem}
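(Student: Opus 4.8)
The plan is to prove \eqref{distr-single} by conditioning on $\mathcal F_\infty$ and exploiting the symmetry built into \eqref{self-con-proba}. Fix $n\ge1$ and consider $\tilde\xi_{\alpha_n}$. On the event $\{\alpha_n=\infty\}$ we have $\tilde\xi_{\alpha_n}=\zeta_n$, which is $\pm1$ with probability $\tfrac12$ each and, by construction, independent of $\mathcal F_\infty$ and hence of the event $\{\alpha_n=\infty\}$ itself (this event is $\mathcal F_\infty$-measurable since $\alpha_n$ is a stopping time). So it suffices to show that, on the complementary event $\{\alpha_n<\infty\}=\bigcup_{k\ge n}\{\alpha_n=k\}$, one also has $P(\tilde\xi_{\alpha_n}=1\mid \alpha_n=k)=\tfrac12$ for each $k$ with $P(\alpha_n=k)>0$; summing over $k$ and combining with the $\{\alpha_n=\infty\}$ piece then gives the claim.

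For the key step, fix $k\ge n$ and observe, as in the proof of Lemma \ref{property of alpha&beta}(3), that $\{\alpha_n=k\}=\{T_{k-1}=n-1\}\cap\{\xi_k=\eta_k\}$, and the event $A:=\{T_{k-1}=n-1\}$ is $\mathcal F_{k-1}$-measurable. On $\{\alpha_n=k\}$ we have $\tilde\xi_{\alpha_n}=\xi_k$, so I must compute $P(\xi_k=1,\ \xi_k=\eta_k,\ A)$ versus $P(\xi_k=-1,\ \xi_k=\eta_k,\ A)$. Using \eqref{increment probability2}--\eqref{increment probability3} together with the tower property over $\mathcal F_{k-1}$:
\begin{align*}
P(\xi_k=1,\eta_k=1,A)&=E\!\left[\mathbf 1_A\,\vartheta_k\right],\\
P(\xi_k=-1,\eta_k=-1,A)&=E\!\left[\mathbf 1_A\,\vartheta_k\right],
\end{align*}
so the two quantities are equal, and therefore $P(\tilde\xi_{\alpha_n}=1,\alpha_n=k)=P(\tilde\xi_{\alpha_n}=-1,\alpha_n=k)$. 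Summing over all admissible $k$ and adding the contribution from $\{\alpha_n=\infty\}$ yields $P(\tilde\xi_{\alpha_n}=1)=P(\tilde\xi_{\alpha_n}=-1)$, which forces both to equal $\tfrac12$. The argument for $\tilde\xi_{\beta_m}$ is identical after replacing $T$ by $S$, $\{\xi_k=\eta_k\}$ by $\{\xi_k=-\eta_k\}$, and $\zeta$ by $\psi$; here the relevant identity is $P(\xi_k=1,\eta_k=-1,A')=E[\mathbf 1_{A'}(\tfrac12-\vartheta_k)]=P(\xi_k=-1,\eta_k=1,A')$ with $A'=\{S_{k-1}=m-1\}\in\mathcal F_{k-1}$.

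The only real subtlety — and the step I would be most careful about — is the bookkeeping on the infinite event: one must check that $\{\alpha_n=\infty\}\in\mathcal F_\infty$ so that the auxiliary variable $\zeta_n$, assumed independent of $\mathcal F_\infty$, is genuinely independent of which case occurs, and that the modification \eqref{def tilde xi} is measurable so that $\tilde\xi_{\alpha_n}$ is a bona fide random variable. Everything else is a routine conditional-probability computation driven by the exact symmetry of \eqref{self-con-proba}; no cancellation or delicate estimate is needed, since the two probabilities in each pair are literally equal term by term.
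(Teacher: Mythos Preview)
Your proof is correct and follows essentially the same approach as the paper: split into the events $\{\alpha_n<\infty\}$ and $\{\alpha_n=\infty\}$, use the tower property over $\mathcal F_{k-1}$ together with the symmetry $P(\xi_k=\eta_k=1\mid\mathcal F_{k-1})=P(\xi_k=\eta_k=-1\mid\mathcal F_{k-1})$ on the finite part, and use the independence of $\zeta_n$ from $\mathcal F_\infty\ni\{\alpha_n=\infty\}$ on the infinite part. The paper writes the conditional probabilities as $P(\xi_i=\eta_i=\pm1\mid\mathcal F_{i-1})$ rather than $\vartheta_k$, but the computation is identical.
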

\begin{proof}
 For simplicity, we only consider $\tilde\xi_{\alpha_n}$. The proof for $\tilde\xi_{\beta_m}$ is similar.
Note that
$$P(\tilde \xi_{\alpha_n}=1)=P(\xi_{\alpha_n}=1,\alpha_n<\infty)+P(\xi_{\alpha_n}=1,\alpha_n=\infty).$$
For the first term in the righthand of the above equation,
\begin{equation}\label{distr-noninfi}
\begin{split}
P(\xi_{\alpha_n}=1,\alpha_n<\infty)
=&\sum_{i\geq n, i<\infty}P(\xi_{i}=1,\alpha_n=i)\\
=&\sum_{i\geq n, i<\infty}P(\xi_{i}=\eta_i=1,T_{i-1}=n-1)\\
=&\sum_{i\geq n, i<\infty}E\left[1_{T_{i-1}=n-1}P(\xi_{i}=\eta_i=1|\mathcal{F}_{i-1})\right]\\
=&\sum_{i\geq n, i<\infty}E\left[1_{T_{i-1}=n-1}P(\xi_{i}=\eta_i=-1|\mathcal{F}_{i-1})\right]\\
=&P(\xi_{\alpha_n}=-1,\alpha_n<\infty),
\end{split}
\end{equation}
thus we can get $P(\tilde\xi_{\alpha_n}=1,\alpha_n<\infty)=\frac12 P(\alpha_n<\infty)$.
On the other hand,
 $\{\alpha_n=\infty\}\in\mathcal{F}_{\infty}$ and hence the set is independent from $\zeta_n,$
\begin{equation}\label{alpha_n=infty}
P(\tilde\xi_{\alpha_n}=1,\alpha_n=\infty)=P(\zeta_n=1,\alpha_n=\infty)
=\frac12P(\alpha_n=\infty).
\end{equation}
Then, $P(\tilde\xi_{\alpha_n}=1)=P(\tilde\xi_{\alpha_n}=-1)=
\frac12\big(P(\alpha_n=\infty)+P(\alpha_n<\infty)\big)=\frac12$ as claimed.

\end{proof}

\begin{lem}\label{two parts} Under the assumptions of Theorem \ref{independent random walks}, for $n_k>n_{k-1}>\cdots>n_1,m_l>m_{l-1}>\cdots>m_1$, we have
\begin{equation}\label{A-B}
\begin{split}
  P(\tilde\xi_{\alpha_{n_i}}=x_i,\tilde\xi_{\beta_{m_j}}=y_j,1\leq i\leq k,1\leq j\leq l)=&\frac12 P(\tilde\xi_{\alpha_{n_i}}=x_i,\tilde\xi_{\beta_{m_j}}=y_j,1\leq i\leq k-1,1\leq j\leq l,\alpha_{n_k}>\beta_{m_l})\\
  &+\frac12 P(\tilde\xi_{\alpha_{n_i}}=x_i,\tilde\xi_{\beta_{m_j}}=y_j,1\leq i\leq k,1\leq j\leq l-1,\alpha_{n_k}<\beta_{m_l}).
\end{split}
\end{equation}
\end{lem}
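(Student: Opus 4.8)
The plan is to condition on which of the two ``last'' stopping times $\alpha_{n_k}$ and $\beta_{m_l}$ occurs first. By part 3) of Lemma \ref{property of alpha&beta} these two indices can never coincide --- not even both being $\infty$ --- so $\Omega$ is the disjoint union of $\{\alpha_{n_k}>\beta_{m_l}\}$ and $\{\alpha_{n_k}<\beta_{m_l}\}$, and it suffices to prove the one-sided identity
\begin{multline*}
P(\tilde\xi_{\alpha_{n_i}}=x_i,\tilde\xi_{\beta_{m_j}}=y_j,1\le i\le k,1\le j\le l,\alpha_{n_k}>\beta_{m_l})\\
=\tfrac12\,P(\tilde\xi_{\alpha_{n_i}}=x_i,\tilde\xi_{\beta_{m_j}}=y_j,1\le i\le k-1,1\le j\le l,\alpha_{n_k}>\beta_{m_l})
\end{multline*}
together with its mirror image, obtained by exchanging the roles of $\alpha$ and $\beta$, dropping the constraint on $\tilde\xi_{\beta_{m_l}}$, and replacing $\{\alpha_{n_k}>\beta_{m_l}\}$ by $\{\alpha_{n_k}<\beta_{m_l}\}$; adding the two identities over this partition of $\Omega$ then yields \eqref{A-B}. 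Since the conditional law \eqref{increment probability2}--\eqref{increment probability3} of $(\xi_r,\eta_r)$ given $\mathcal{F}_{r-1}$ is symmetric under exchanging common and counter moves, only the displayed identity needs a proof, and I would obtain it by splitting $\{\alpha_{n_k}>\beta_{m_l}\}$ according to the value of $\alpha_{n_k}$.

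On the part where $\alpha_{n_k}=r<\infty$ we have $\{\alpha_{n_k}=r\}=\{T_{r-1}=n_k-1\}\cap\{\xi_r=\eta_r\}$, and $\{\alpha_{n_k}=r\}\cap\{\alpha_{n_k}>\beta_{m_l}\}=\{\alpha_{n_k}=r\}\cap\{S_{r-1}\ge m_l\}$. The key point is that on $\{T_{r-1}=n_k-1\}\cap\{S_{r-1}\ge m_l\}$ every one of $\alpha_{n_1},\dots,\alpha_{n_{k-1}},\beta_{m_1},\dots,\beta_{m_l}$ is finite and at most $r-1$, because $T$ and $S$ are nondecreasing with unit increments, so that $T_{r-1}=n_k-1\ge n_{k-1}$ forces $\alpha_{n_i}\le r-1$ for $i<k$ and $S_{r-1}\ge m_l$ forces $\beta_{m_j}\le r-1$ for $j\le l$; consequently the set
\begin{equation*}
A_r:=\{T_{r-1}=n_k-1\}\cap\{S_{r-1}\ge m_l\}\cap\bigcap_{i<k}\{\tilde\xi_{\alpha_{n_i}}=x_i\}\cap\bigcap_{j\le l}\{\tilde\xi_{\beta_{m_j}}=y_j\}
\end{equation*}
is $\mathcal{F}_{r-1}$-measurable. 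Since $\tilde\xi_{\alpha_{n_k}}=\xi_r$ on $\{\alpha_{n_k}=r\}$, the $r$-th summand of the left-hand side of the displayed identity equals $P(A_r\cap\{\xi_r=\eta_r=x_k\})=E[1_{A_r}\vartheta_r]$ --- the conditional probability $P(\xi_r=\eta_r=x_k|\mathcal{F}_{r-1})$ being $\vartheta_r$ for both $x_k=1$ and $x_k=-1$, by the definition of $\vartheta_r$ and by \eqref{increment probability3} --- whereas the $r$-th summand of the right-hand side equals $\tfrac12\,P(A_r\cap\{\xi_r=\eta_r\})=\tfrac12\,E[1_{A_r}\cdot2\vartheta_r]$; these agree, and summing over $r\ge1$ accounts for $\{\alpha_{n_k}<\infty\}\cap\{\alpha_{n_k}>\beta_{m_l}\}$ (the degenerate case $\vartheta_r=0$ is harmless, since both summands then vanish).

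For the remaining part $\{\alpha_{n_k}=\infty\}$, note that $\{\alpha_{n_k}=\infty\}\in\mathcal{F}_\infty$ and, again by part 3) of Lemma \ref{property of alpha&beta}, $\{\alpha_{n_k}=\infty\}\subseteq\{\beta_{m_l}<\infty\}\subseteq\{\alpha_{n_k}>\beta_{m_l}\}$, so this event already lies entirely inside $\{\alpha_{n_k}>\beta_{m_l}\}$. On it $\tilde\xi_{\alpha_{n_k}}=\zeta_k$ by \eqref{def tilde xi}, and $\zeta_k$ is, by construction, independent of $\mathcal{F}_\infty$ and of $\{\zeta_i:i<k\}$ and $\{\psi_j\}$, hence of the remaining constraints $\tilde\xi_{\alpha_{n_i}}=x_i$ $(i<k)$ and $\tilde\xi_{\beta_{m_j}}=y_j$ $(j\le l)$ and of the event $\{\alpha_{n_k}=\infty\}$ itself; so this part too contributes exactly a factor $\tfrac12$. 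Adding the $\{\alpha_{n_k}<\infty\}$ and $\{\alpha_{n_k}=\infty\}$ contributions proves the displayed identity; the mirror identity follows by the same argument with $\psi_l$ in place of $\zeta_k$ and \eqref{increment probability2} in place of \eqref{increment probability3}, and \eqref{A-B} follows.

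I expect the main obstacle to be the bookkeeping in the finite case: checking carefully that on the relevant event all the auxiliary stopping times $\alpha_{n_i}$ $(i<k)$ and $\beta_{m_j}$ $(j\le l)$ are at most $r-1$, so that $A_r\in\mathcal{F}_{r-1}$, and expressing $\{\alpha_{n_k}=r\}$ cleanly as an $\mathcal{F}_{r-1}$-measurable set intersected with $\{\xi_r=\eta_r\}$, so that conditioning on $\mathcal{F}_{r-1}$ and invoking \eqref{increment probability3} produces the factor $\tfrac12$ uniformly in $x_k\in\{1,-1\}$.
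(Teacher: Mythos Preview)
Your proof is correct and follows essentially the same route as the paper: partition $\Omega$ via Lemma~\ref{property of alpha&beta}(3), decompose over the value $r$ of the larger stopping time, condition on $\mathcal F_{r-1}$, and use the symmetry $P(\xi_r=\eta_r=1|\mathcal F_{r-1})=P(\xi_r=\eta_r=-1|\mathcal F_{r-1})=\vartheta_r$ to extract the factor $\tfrac12$, with the $\{\alpha_{n_k}=\infty\}$ piece handled by independence of the auxiliary $\zeta$'s. Your justification that $A_r\in\mathcal F_{r-1}$ is in fact more explicit than the paper's; the only slip is notational --- on $\{\alpha_{n_k}=\infty\}$ one has $\tilde\xi_{\alpha_{n_k}}=\zeta_{n_k}$, not $\zeta_k$, and likewise $\psi_{m_l}$ in the mirror case --- which does not affect the argument.
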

\begin{proof}

 Recall the conclusion in Lemma \ref{property of alpha&beta}, we can divide $\Omega$ into two disjoint sets $\{\alpha_{n_k}>\beta_{m_l}\}$ and $\{\alpha_{n_k}<\beta_{m_l}\}$. Then
\begin{equation}\label{divide}
\begin{split}
  &P(\tilde\xi_{\alpha_{n_i}}=x_i,\tilde\xi_{\beta_{m_j}}=y_j,1\leq i\leq k,1\leq j\leq l)\\
  =&P(\tilde\xi_{\alpha_{n_i}}=x_i,\tilde\xi_{\beta_{m_j}}=y_j,1\leq i\leq k,1\leq j\leq l,\alpha_{n_k}>\beta_{m_l})+P(\tilde\xi_{\alpha_{n_i}}=x_i,\tilde\xi_{\beta_{m_j}}=y_j,1\leq i\leq k,1\leq j\leq l,\alpha_{n_k}<\beta_{m_l}).
\end{split}
\end{equation}
For the first part in the righthand of \eqref{divide}, we apply similar techniques as in (\ref{distr-noninfi}) and (\ref{alpha_n=infty}),
\begin{equation}
\label{x_k to -x_k}
\begin{split}
&P(\tilde\xi_{\alpha_{n_i}}=x_i,\tilde\xi_{\beta_{m_j}}=y_j,1\leq i\leq k-1,1\leq j\leq l,\tilde\xi_{\alpha_{n_k}}=x_{k},\alpha_{n_k}>\beta_{m_l})\\
=&\sum_{n_k\leq h<\infty}E\left[P\left(\xi_{\alpha_{n_i}}=x_i,\xi_{\beta_{m_j}}=y_j,1\leq i\leq k-1,1\leq j\leq l,\tilde\xi_{\alpha_{n_k}}=x_{k},h=\alpha_{n_k}>\beta_{m_l}|\mathcal F_{h-1}\right)\right]\\
&+P(\zeta_{n_k}=x_k)P(\tilde\xi_{\alpha_{n_i}}=x_i,\tilde\xi_{\beta_{m_j}}=y_j,1\leq i\leq k-1,1\leq j\leq l, \infty=\alpha_{n_k}>\beta_{m_l})\\
=&\sum_{n_k\leq h<\infty}E\left[1_{\{\xi_{\alpha_{n_i}}=x_i,\xi_{\beta_{m_j}}=y_j,1\leq i\leq k-1,1\leq j\leq l;T_{h-1}=n_k-1\}}P\left(\xi_{h}=\eta_h=x_{k}|\mathcal F_{h-1}\right)\right]\\
&+P(\zeta_{n_k}=-x_{k})P(\tilde\xi_{\alpha_{n_i}}=x_i,\tilde\xi_{\beta_{m_j}}=y_j,1\leq i\leq k-1,1\leq j\leq l, \infty=\alpha_{n_k}>\beta_{m_l})\\
=&\sum_{n_k\leq h<\infty}E\left[1_{\{\xi_{\alpha_{n_i}}=x_i,\xi_{\beta_{m_j}}=y_j,1\leq i\leq k-1,1\leq j\leq l;T_{h-1}=n_k-1\}}P\left(\xi_{h}=\eta_h=-x_{k}|\mathcal F_{h-1}\right)\right]\\
&+P(\zeta_{n_k}=-x_{k})P(\tilde\xi_{\alpha_{n_i}}=x_i,\tilde\xi_{\beta_{m_j}}=y_j,1\leq i\leq k-1,1\leq j\leq l, \infty=\alpha_{n_k}>\beta_{m_l})\\
=&P(\tilde\xi_{\alpha_{n_i}}=x_i,\tilde\xi_{\beta_{m_j}}=y_j,1\leq i\leq k-1,1\leq j\leq l,\tilde\xi_{\alpha_{n_k}}=-x_{k},\alpha_{n_k}>\beta_{m_l}),
\end{split}
\end{equation}
where the third equality can be obtained by $P(\xi_h=\eta_h=x_k|\mathcal{F}_{h-1})=P(\xi_h=\eta_h=-x_k|\mathcal{F}_{h-1})$.
 Note that
\begin{eqnarray*}
&&P(\tilde\xi_{\alpha_{n_i}}=x_i,\tilde\xi_{\beta_{m_j}}=y_j,1\leq i\leq k-1,1\leq j\leq l,\alpha_{n_k}>\beta_{m_l})\\
&=&P(\tilde\xi_{\alpha_{n_i}}=x_i,\tilde\xi_{\beta_{m_j}}=y_j,1\leq i\leq k-1,1\leq j\leq l,\tilde\xi_{\alpha_{n_k}}=x_k,\alpha_{n_k}>\beta_{m_l})\\
&&+P(\tilde\xi_{\alpha_{n_i}}=x_i,\tilde\xi_{\beta_{m_j}}=y_j,1\leq i\leq k-1,1\leq j\leq l,\tilde\xi_{\alpha_{n_k}}=-x_k,\alpha_{n_k}>\beta_{m_l}),
\end{eqnarray*}
then from \eqref{x_k to -x_k} follows that
\begin{equation}\label{alpha>beta}
\begin{split}
P(\tilde\xi_{\alpha_{n_i}}=x_i,\tilde\xi_{\beta_{m_j}}=y_j,1\leq i\leq k,1\leq j\leq l,\alpha_{n_k}>\beta_{m_l})=\frac12 P(\tilde\xi_{\alpha_{n_i}}=x_i,\tilde\xi_{\beta_{m_j}}=y_j,1\leq i\leq k-1,1\leq j\leq l,\alpha_{n_k}>\beta_{m_l}).
\end{split}
\end{equation}
By similar arguments we can get
\begin{equation}\label{alpha<beta}
\begin{split}
P(\tilde\xi_{\alpha_{n_i}}=x_i,\tilde\xi_{\beta_{m_j}}=y_j,1\leq i\leq k,1\leq j\leq l,\alpha_{n_k}<\beta_{m_l})=\frac12 P(\tilde\xi_{\alpha_{n_i}}=x_i,\tilde\xi_{\beta_{m_j}}=y_j,1\leq i\leq k,1\leq j\leq l-1,\alpha_{n_k}<\beta_{m_l}).
\end{split}
\end{equation}
Combining (\ref{alpha>beta}) and (\ref{alpha<beta}) and applying (\ref{divide})
, it yields \eqref{A-B}.

\end{proof}

Now we can finish the proof of Theorem \ref{independent random walks}.

\begin{proofofof}{\ref{independent random walks}}
 By Lemma \ref{BIN}, all the $\tilde\xi_{\alpha_n}$, $\tilde\xi_{\beta_n},n\geq1$ are identically distributed. We now proceed to show that they are independent.

It is sufficient to prove that for any $k,l\in \mathbb N,\{n_1,\cdots,n_k,m_1,\cdots,m_l\}\subset\mathbb N$ and $x_1, y_1, x_2, y_2$, $\cdots$, $x_k$, $y_l \in \{1,-1\}$,
\begin{equation}\label{inde-eq}
  P(\tilde\xi_{\alpha_{n_i}}=x_i,\tilde\xi_{\beta_{m_j}}=y_j,1\leq i\leq k,1\leq j\leq l)=\frac1{2^{k+l}}.
\end{equation}
Without loss of generality, we assume that $n_k>\cdots>n_1,m_l>\cdots>m_1$. 
If there are only $\alpha$'s, i.e., $l=0$, we put ${m_j}=0,1\leq j\leq l$ and define $\beta_0=0$, vice versa.
Set $M=k+l$, we will complete the proof by induction.

From Lemma \ref{BIN}, (\ref{inde-eq}) is true for $M=1$. Next, we assume (\ref{inde-eq}) is true for any $M<N$, and we will prove \eqref{inde-eq} is true for $M=N$.


It is evident that, if there are only $\alpha$'s, since $\beta_{m_l}=0$, we get $\{\alpha_{n_k}<\beta_{m_l}\}=\emptyset$. 
By (\ref{A-B}),
\begin{equation}
\label{only alpha}
\begin{split}
P(\tilde\xi_{\alpha_{n_i}}=x_i,1\leq i\leq k)
=&\frac12 P(\tilde\xi_{\alpha_{n_i}}=x_i,1\leq i\leq k-1,\alpha_{n_k}>\beta_{m_l})\\
=&\frac12 P(\tilde\xi_{\alpha_{n_i}}=x_i,1\leq i\leq k-1).
\end{split}
\end{equation}
Since \eqref{inde-eq} holds for any $M<N$, we have $P(\tilde\xi_{\alpha_{n_i}}=x_i,1\leq i\leq k)=\frac12\cdot\frac1{2^{N-1}}=\frac1{2^N}.$
 Thus (\ref{inde-eq}) is true for $M=N$. The case with only $\beta$s is similar.

 Next we consider the case that both $\alpha$'s and $\beta$s are contained. From our assumption,
\begin{equation}\label{1/2^(N-1)}
\begin{split}
&P(\tilde\xi_{\alpha_{n_i}}=x_i,\tilde\xi_{\beta_{m_j}}=y_j,1\leq i\leq k-1,1\leq j\leq l-1,\tilde\xi_{\alpha_{n_k}}=x_k,\tilde\xi_{\beta_{m_l}}=1)\\
&+P(\tilde\xi_{\alpha_{n_i}}=x_i,\tilde\xi_{\beta_{m_j}}=y_j,1\leq i\leq k-1,1\leq j\leq l-1,\tilde\xi_{\alpha_{n_k}}=x_k,\tilde\xi_{\beta_{m_l}}=-1)\\
=&P(\tilde\xi_{\alpha_{n_i}}=x_i,\tilde\xi_{\beta_{m_j}}=y_j,1\leq i\leq k-1,1\leq j\leq l-1,\tilde\xi_{\alpha_{n_k}}=x_k)=\frac1{2^{N-1}}.
\end{split}
\end{equation}
Applying Lemma \ref{two parts} for both of the two probabilities in the lefthand of \eqref{1/2^(N-1)},
\begin{equation}
\label{equation set1}
\begin{split}
&P(\tilde\xi_{\alpha_{n_i}}=x_i,\tilde\xi_{\beta_{m_j}}=y_j,1\leq i\leq k-1,1\leq j\leq l-1,\tilde\xi_{\alpha_{n_k}}=x_k,\alpha_{n_k}<\beta_{m_l})\\
&+\frac12 P(\tilde\xi_{\alpha_{n_i}}=x_i,\tilde\xi_{\beta_{m_j}}=y_j,1\leq i\leq k-1,1\leq j\leq l-1,\tilde\xi_{\beta_{m_l}}=1,\alpha_{n_k}>\beta_{m_l})\\
&+\frac12 P(\tilde\xi_{\alpha_{n_i}}=x_i,\tilde\xi_{\beta_{m_j}}=y_j,1\leq i\leq k-1,1\leq j\leq l-1,\tilde\xi_{\beta_{m_l}}=-1,\alpha_{n_k}>\beta_{m_l})=\frac1{2^{N-1}}.
\end{split}
\end{equation}
Note that in \eqref{equation set1} only the first term $P(\tilde\xi_{\alpha_{n_i}}=x_i,\tilde\xi_{\beta_{m_j}}=y_j,1\leq i\leq k-1,1\leq j\leq l-1,\tilde\xi_{\alpha_{n_k}}=x_k,\alpha_{n_k}<\beta_{m_l})$ contains $x_k$ and $x_k\in\{1,-1\}$, which implies that
\begin{equation}
\label{def a}
\begin{split}
&P(\tilde\xi_{\alpha_{n_i}}=x_i,\tilde\xi_{\beta_{m_j}}=y_j,1\leq i\leq k-1,1\leq j\leq l-1,\tilde\xi_{\alpha_{n_k}}=1,\alpha_{n_k}<\beta_{m_l})\\
=&P(\tilde\xi_{\alpha_{n_i}}=x_i,\tilde\xi_{\beta_{m_j}}=y_j,1\leq i\leq k-1,1\leq j\leq l-1,\tilde\xi_{\alpha_{n_k}}=-1,\alpha_{n_k}<\beta_{m_l}):=a.\\
\end{split}
\end{equation}
Similarly, consider $\frac1{2^{N-1}}=P(\tilde\xi_{\alpha_{n_i}}=x_i,\tilde\xi_{\beta_{m_j}}=y_j,1\leq i\leq k-1,1\leq j\leq l-1,\tilde\xi_{\beta_{m_l}}=y_l)$ and apply the same method as for (\ref{equation set1}) and (\ref{def a}), we have
\begin{equation}
\label{def b}
\begin{split}
&P(\tilde\xi_{\alpha_{n_i}}=x_i,\tilde\xi_{\beta_{m_j}}=y_j,1\leq i\leq k-1,1\leq j\leq l-1,\tilde\xi_{\beta_{m_l}}=1,\alpha_{n_k}>\beta_{m_l})\\
=&P(\tilde\xi_{\alpha_{n_i}}=x_i,\tilde\xi_{\beta_{m_j}}=y_j,1\leq i\leq k-1,1\leq j\leq l-1,\tilde\xi_{\beta_{m_l}}=-1,\alpha_{n_k}>\beta_{m_l}):=b.
\end{split}
\end{equation}
Then by Lemma \ref{two parts},
$$P(\tilde\xi_{\alpha_{n_i}}=x_i,\tilde\xi_{\beta_{m_j}}=y_j,1\leq i\leq k,1\leq j\leq l)=\frac12a+\frac12b$$
and from \eqref{1/2^(N-1)}, $\frac{1}{2^{N-1}}=a+b.$

Then it is immediate that (\ref{inde-eq}) is true for $M=N$. Thus independency of $\tilde\xi_{\alpha_n},\tilde\xi_{\beta_n},n\geq 1$ follows from induction.
\end{proofofof}


 By the definitions of $\{X_n,n\geq1\}$ and $\{Y_n,n\geq1\}$ and applying Theorem \ref{independent random walks}, we immediately have the following corollary.
\begin{cor}\label{corollary1}
$\{X_n,n\geq 1\}$ and $\{Y_n,n\geq 1\}$ are independent random walks.
\end{cor}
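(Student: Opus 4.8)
The corollary asserts two things: first, that each of $\{X_n\}$ and $\{Y_n\}$ is itself a random walk (with i.i.d.\ $\pm1$ symmetric Bernoulli increments), and second, that the two sequences are independent of each other. The plan is to obtain both facts directly from Theorem~\ref{independent random walks} via the representations
\[
X_n=\sum_{i=1}^{n}\tilde\xi_{\alpha_i},\qquad Y_n=\sum_{i=1}^{n}\tilde\xi_{\beta_i},
\]
which hold unconditionally because of the complementary definition introduced in \eqref{def tilde xi}. Theorem~\ref{independent random walks} tells us that the whole family $\{\tilde\xi_{\alpha_n},\tilde\xi_{\beta_n}:n\geq1\}$ consists of i.i.d.\ symmetric $\pm1$ random variables.

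First I would record that, since $\{\tilde\xi_{\alpha_n}:n\ge1\}$ is a subfamily of this i.i.d.\ family, it is itself an i.i.d.\ sequence of symmetric $\pm1$ variables, so its partial-sum process $\{X_n\}$ is a standard random walk; the same argument applied to $\{\tilde\xi_{\beta_n}:n\ge1\}$ shows $\{Y_n\}$ is a standard random walk. Next, for the mutual independence of the two processes, I would note that $\{X_n:n\ge1\}$ is $\sigma(\tilde\xi_{\alpha_i}:i\ge1)$-measurable and $\{Y_n:n\ge1\}$ is $\sigma(\tilde\xi_{\beta_i}:i\ge1)$-measurable, and that these two $\sigma$-algebras are independent: indeed, independence of the full family $\{\tilde\xi_{\alpha_n},\tilde\xi_{\beta_n}:n\ge1\}$ implies that any two disjoint subfamilies generate independent $\sigma$-algebras (this is the standard grouping property of independent collections). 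Hence $\sigma(X_n:n\ge1)$ and $\sigma(Y_n:n\ge1)$ are independent, which is exactly the assertion that $\{X_n\}$ and $\{Y_n\}$ are independent random walks.

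There is essentially no obstacle here: the corollary is a formal consequence of Theorem~\ref{independent random walks} once one invokes the elementary fact that disjoint blocks of an independent family are independent and that a subfamily of an i.i.d.\ family is i.i.d. The only point worth a sentence of care is that the identities $X_n=\sum_{i=1}^n\tilde\xi_{\alpha_i}$ and $Y_n=\sum_{i=1}^n\tilde\xi_{\beta_i}$ are taken as the \emph{definitions} of $X_n,Y_n$ in the regime where $\alpha_i$ or $\beta_i$ may be infinite, so no case distinction on finiteness of the stopping times is needed at this stage. I would therefore keep the proof to two or three lines, citing Theorem~\ref{independent random walks} for the i.i.d.\ property of the increments and the grouping property of independence for the cross-independence of $X$ and $Y$.
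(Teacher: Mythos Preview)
Your proposal is correct and follows exactly the paper's approach: the corollary is stated as an immediate consequence of Theorem~\ref{independent random walks} together with the definitions $X_n=\sum_{i=1}^n\tilde\xi_{\alpha_i}$ and $Y_n=\sum_{i=1}^n\tilde\xi_{\beta_i}$. Your explicit invocation of the grouping property of independence simply spells out what the paper leaves implicit in the word ``immediately.''
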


The two correlated standard random walks $B$ and $W$ can be decomposed as
\begin{eqnarray*}
B_n&=&\sum_{i=1}^{T_n}\xi_{\alpha_i}+\sum_{j=1}^{n-T_n}\xi_{\beta_j}=X_{T_n}+Y_{S_n},\\
W_n&=&\sum_{i=1}^{T_n}\xi_{\alpha_i}-\sum_{j=1}^{n-T_n}\xi_{\beta_j}=X_{T_n}-Y_{S_n}.
\end{eqnarray*}
Correlation between $B$ and $W$ can be illustrated by three processes, $X,Y$ and $T$, where $X$ and $Y$ indicates the same-direction and the opposite-direction moves of $B$ and $W$ respectively, and $T$ records the number of common movements. According to Theorem \ref{independent random walks} and Corollary \ref{corollary1}, when \eqref{self-con-proba} is satisfied, the distribution of $(X,Y)$ does not depend on the correlation of $B$ and $W$. Consequently, the dependency structure of $B$ and $W$ is only contained in $T$.

\begin{rem}
 Consider general random walks with $P(\xi_n=1|\mathcal{F}_{n-1})=P(\eta_n=1|\mathcal{F}_{n-1})=p,$ where $0<p<1,\ p\neq\frac12$. Set $\vartheta_n=P(\xi_n=\eta_n=1|\mathcal{F}_{n-1})$ as before, we still have $P(\xi_n=1,\eta_n=-1|\mathcal{F}_{n-1})=P(\xi_n=-1,\eta_n=1|\mathcal{F}_{n-1})$. Thus the proof for Theorem \ref{independent random walks} remains valid for $\{\tilde\xi_{\beta_1},\tilde\xi_{\beta_2},\cdots\}$. And then the counter-move process $Y$ is still a standard random walk.

 But for $X$, since $P(\xi_n=\eta_n=-1|\mathcal{F}_{n-1})=1-2p+\vartheta_n\neq P(\xi_n=\eta_n=1|\mathcal{F}_{n-1}),$ we only have $$P(\xi_{\alpha_n}=1,\alpha_n<\infty)=\frac12P(\alpha_n<\infty)-\frac{1-2p}{2}\sum_{i\geq n-1}P(T_i=n-1).$$

Under the assumption that $P(\lim_{i\to\infty}T_i=\infty)=1$, we can get
$$\sum_{i\geq n-1}P(T_i=n-1)>P(\bigcup_{i\geq n-1}\{T_i=n-1\})=P(\lim_{i\to\infty}T_i=\infty)=1,$$
and $\alpha_n<\infty$ a.s. for any $n\geq1$, thus $P(\xi_{\alpha_n}=1)=P(\xi_{\alpha_n}=1,\alpha_n<\infty)<p$ when $p<\frac12$ and $P(\xi_{\alpha_n}=1)>p$ when $p>\frac12$. It can be seen that if two correlated random walks have the same moving trend ($p\neq 1/2$), they can be decomposed as a random walk $X$ which have a even stronger trend and a white noise $Y$.

\end{rem}

Consider again the standard case $P(\xi_n=1|\mathcal{F}_{n-1})=P(\eta_n=1|\mathcal{F}_{n-1})=\frac12$. The ideal situation that the three process $X$, $Y$ and $T$ are independent may bring us a lot of convenience from both theoretic and practical views. We are thus present the following sufficient and necessary conditions for independence of the three processes.

We begin this with introducing a condition, which is commonly used in credit intensity models in math-finance (see \cite[Chapter 6]{bielecki2013credit} for more details):
\begin{description}
\item[C1)] For any $n\in \mathbb N$, the $\sigma-$fields $\mathcal H_n$ and $\mathcal{G}_{\infty}$ are conditionally independent given $\mathcal{G}_{n}$.
\end{description}
\begin{thm}\label{XYT independent}
 Let $\mathcal G_n\triangleq\sigma(T_k,k\le n)$, $\mathcal H_n\triangleq \sigma(B_k,W_k,k\le n)$. Then
 $\{X_n,n\geq 1\}$, $\{Y_n,n\geq 1\}$ and $\{T_n,n\geq 1\}$ are independent iff the condition C1) is satisfied.
 \end{thm}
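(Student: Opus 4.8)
The plan is to recast the statement entirely in $\sigma$-field language and reduce it to one clean assertion: that the increment sequence $(\xi_k)_{k\ge1}$ is independent of $\mathcal G_\infty:=\bigvee_n\mathcal G_n=\sigma(T_k,k\ge1)$. I would first record the purely algebraic facts. Since $Q_k=1_{\{\xi_k=\eta_k\}}$ and $\eta_k=(2Q_k-1)\xi_k$, knowing $(\xi_k,\eta_k)_{k\le n}$ is the same as knowing $(\xi_k,Q_k)_{k\le n}$, so $\mathcal H_n=\sigma(\xi_1,\dots,\xi_n)\vee\mathcal G_n$ and in particular $\mathcal G_n\subseteq\mathcal H_n$. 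Because $(\mathcal B\vee\mathcal D)\perp\mathcal C\mid\mathcal D$ says the same thing as $\mathcal B\perp\mathcal C\mid\mathcal D$, condition C1) is equivalent to: for every $n$, $\sigma(\xi_1,\dots,\xi_n)\perp\mathcal G_\infty\mid\mathcal G_n$. On the other side, $X_n=\sum_{i\le n}\tilde\xi_{\alpha_i}$ and $Y_n=\sum_{j\le n}\tilde\xi_{\beta_j}$ give $\sigma(X)=\sigma(\tilde\xi_{\alpha_i},i\ge1)$, $\sigma(Y)=\sigma(\tilde\xi_{\beta_j},j\ge1)$, and $\sigma(T)=\mathcal G_\infty$, so "$X,Y,T$ mutually independent" means "$\sigma(X)\perp\sigma(Y)$ and $\sigma(X)\vee\sigma(Y)\perp\mathcal G_\infty$", the first part being exactly Corollary \ref{corollary1}.

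Next I would prove an auxiliary fact using only \eqref{self-con-proba}: for every $n$, $\sigma(\xi_1,\dots,\xi_n)\perp\mathcal G_n$. This follows from the same successive-conditioning computation as in \eqref{distr-noninfi}: for $x\in\{\pm1\}^n$ and $\epsilon\in\{0,1\}^n$ write $\{\xi_k=x_k,\,Q_k=\epsilon_k,\,k\le n\}=\bigcap_{k\le n}\{\xi_k=x_k,\ \eta_k=(2\epsilon_k-1)x_k\}$ and condition down from $\mathcal F_{n-1}$; by \eqref{increment probability2}--\eqref{increment probability3} the $k$-th conditional factor is $\vartheta_k$ if $\epsilon_k=1$ and $\tfrac12-\vartheta_k$ if $\epsilon_k=0$, never depending on the sign $x_k$, so $P(\xi_{\le n}=x,\,Q_{\le n}=\epsilon)$ is the same for all $2^n$ choices of $x$ and hence equals $2^{-n}P(Q_{\le n}=\epsilon)=P(\xi_{\le n}=x)\,P(Q_{\le n}=\epsilon)$.

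For the direction C1) $\Rightarrow$ independence: from C1), $\mathcal H_n\perp\mathcal G_\infty\mid\mathcal G_n$, so a fortiori $\sigma(\xi_{\le n})\perp\mathcal G_\infty\mid\mathcal G_n$; combining with the auxiliary fact $\sigma(\xi_{\le n})\perp\mathcal G_n$ through the elementary implication "$\mathcal A\perp\mathcal D$ and $\mathcal A\perp\mathcal C\mid\mathcal D$ $\Rightarrow$ $\mathcal A\perp\mathcal C\vee\mathcal D$" yields $\sigma(\xi_{\le n})\perp\mathcal G_\infty$ for every $n$, whence $\sigma(\xi_1,\xi_2,\dots)\perp\mathcal G_\infty$. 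It remains to pass from $(\xi_k)$ to $(\tilde\xi_{\alpha_i},\tilde\xi_{\beta_j})$: the events $\{\alpha_i=\infty\}$, $\{\beta_j=\infty\}$ lie in $\mathcal G_\infty$, on their complements $\tilde\xi_{\alpha_i}=\xi_{\alpha_i}$, $\tilde\xi_{\beta_j}=\xi_{\beta_j}$, and on them $\tilde\xi_{\alpha_i}=\zeta_i$, $\tilde\xi_{\beta_j}=\psi_j$ with $(\zeta,\psi)$ independent of $\mathcal F_\infty\supseteq\sigma(\xi_\cdot)\vee\mathcal G_\infty$. Conditioning on $\mathcal G_\infty$ fixes the common/counter pattern and hence all the finite $\alpha_i,\beta_j$; the family $\{\tilde\xi_{\alpha_i},\tilde\xi_{\beta_j}\}$ then becomes a relabelling of the i.i.d. fair family $(\xi_k)$, padded out with fresh independent fair coins from $(\zeta,\psi)$, and this conditional law does not depend on the pattern. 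Hence $\sigma(X)\vee\sigma(Y)\perp\mathcal G_\infty$, and together with Corollary \ref{corollary1} this gives mutual independence of $X,Y,T$.

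For the converse: if $X,Y,T$ are independent then $\sigma(\tilde\xi_{\alpha_i},\tilde\xi_{\beta_j};i,j)\perp\mathcal G_\infty$. Since step $k$ is either the $T_k$-th common step or the $S_k$-th counter step, $\xi_k=\sum_i 1_{\{\alpha_i=k\}}\tilde\xi_{\alpha_i}+\sum_j 1_{\{\beta_j=k\}}\tilde\xi_{\beta_j}$ with each indicator $\mathcal G_k$-measurable, so $\mathcal H_n=\sigma(\xi_{\le n})\vee\mathcal G_n\subseteq\mathcal G_n\vee\sigma(\tilde\xi_{\alpha_i},\tilde\xi_{\beta_j})$. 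The right-hand field is conditionally independent of $\mathcal G_\infty$ given $\mathcal G_n$ (because $\sigma(\tilde\xi_{\alpha_i},\tilde\xi_{\beta_j})\perp\mathcal G_\infty$ unconditionally and $\mathcal G_n\subseteq\mathcal G_\infty$), hence so is $\mathcal H_n$ for every $n$, which is precisely C1). The step I expect to be the main obstacle is this passage between $(\xi_k)$ and $(\tilde\xi_{\alpha_i},\tilde\xi_{\beta_j})$ in the forward direction: one must verify that the $\mathcal G_\infty$-measurable events $\{\alpha_i=\infty\}$ do not destroy the conditional product structure, which forces one to condition on the entire trajectory of $T$ and bookkeep exactly which $\xi_k$, at which deterministic positions, are being renamed $\tilde\xi_{\alpha_i}$ or $\tilde\xi_{\beta_j}$; everything else is the reuse of the computation behind \eqref{distr-noninfi} and standard conditional-independence manipulations.
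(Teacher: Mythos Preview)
Your proposal is correct and takes a genuinely different route from the paper's proof. The paper proves the ``if'' direction by a direct computation: it first records Lemma~\ref{lemma1-2}, then fixes finite values $d_i,f_j$ for the $\alpha_{n_i},\beta_{m_j}$, uses C1) to replace conditioning on $\mathcal G_\infty$ by conditioning on $\mathcal G_N$, and peels off the largest index one step at a time via Lemma~\ref{lemma1-2}, repeating down to obtain \eqref{equiv inde}. For ``only if'' it writes $\mathcal H_n$ through the events $\{X_{T_1}\in C_1,\dots,Y_{S_n}\in D_n\}$ and applies a $\pi$--$\lambda$ argument. Your approach instead isolates the single structural fact that $\sigma(\xi_1,\xi_2,\dots)$ is independent of $\mathcal G_\infty$: you establish the unconditional independence $\sigma(\xi_{\le n})\perp\mathcal G_n$ by the same sign-symmetry computation, upgrade it to $\sigma(\xi_{\le n})\perp\mathcal G_\infty$ using C1) and the chain rule for conditional independence, and then read off \eqref{equiv inde} by the relabelling argument (conditioning on $\mathcal G_\infty$ freezes the injective map $\{i,j\}\mapsto\{\alpha_i,\beta_j\}$ into the i.i.d.\ family $(\xi_k)$, with $(\zeta,\psi)$ filling in the infinite slots). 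For the converse you invert the bookkeeping, writing $\xi_k$ as a $\mathcal G_k$-measurable selection from the $\tilde\xi$'s, and conclude by the elementary implication ``$\mathcal A\perp\mathcal C$ and $\mathcal D\subseteq\mathcal C$ $\Rightarrow$ $\mathcal A\vee\mathcal D\perp\mathcal C\mid\mathcal D$''. Your argument is shorter and more conceptual, and it makes explicit the intermediate fact $\sigma(\xi_{\le n})\perp\mathcal G_n$ which the paper never states; the paper's version is more hands-on and avoids the abstract conditional-independence calculus, at the cost of the inductive peeling and the separate Lemma~\ref{lemma1-2}. One small point worth making precise in your write-up: when you say the $k$-th conditional factor is $\vartheta_k$ or $\tfrac12-\vartheta_k$, these are $\mathcal F_{k-1}$-measurable random variables rather than constants, so the independence conclusion comes from the fact that the conditional probability at each step does not depend on the sign $x_k$, not from a literal product formula.
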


Before prove Theorem \ref{XYT independent}, we first consider a lemma that will be used later.

\begin{lem}\label{lemma1-2}
If the condition C1) holds, then
$$P\left(\xi_n=\eta_n=1|\mathcal H_{n-1}\bigvee\mathcal G_n\right)=P\left(\xi_n=\eta_n=-1|\mathcal H_{n-1}\bigvee\mathcal G_n\right)$$ and $$P\left(\xi_n=-\eta_n=1|\mathcal H_{n-1}\bigvee\mathcal G_n\right)=P\left(\xi_n=-\eta_n=-1|\mathcal H_{n-1}\bigvee\mathcal G_n\right).$$
\end{lem}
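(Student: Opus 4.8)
The plan is to make the conditioning $\sigma$-field in the statement completely explicit and then check the two claimed identities against a generating $\pi$-system. Write $Q_n=1_{\{\xi_n=\eta_n\}}$ as before; since $\xi_k,\eta_k$ are $\mathcal H_k$-measurable, so is each $Q_k$, hence so is $T_k=\sum_{j\le k}Q_j$. In particular $\mathcal G_{n-1}\subseteq\mathcal H_{n-1}$ and $T_{n-1}$ is $\mathcal H_{n-1}$-measurable, so
\[
\mathcal H_{n-1}\vee\mathcal G_n=\mathcal H_{n-1}\vee\sigma(T_n)=\mathcal H_{n-1}\vee\sigma(Q_n),
\]
and this $\sigma$-field is generated by the $\pi$-system $\mathcal D=\{A\cap C:A\in\mathcal H_{n-1},\,C\in\sigma(Q_n)\}$, which contains $\Omega$. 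Thus it suffices to show that the finite measures $G\mapsto P(\{\xi_n=\eta_n=1\}\cap G)$ and $G\mapsto P(\{\xi_n=\eta_n=-1\}\cap G)$ agree on $\mathcal D$, and similarly for the pair $\{\xi_n=-\eta_n=\pm1\}$.

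To do this I would fix $A\in\mathcal H_{n-1}$ and $C\in\sigma(Q_n)=\{\emptyset,\{Q_n=0\},\{Q_n=1\},\Omega\}$. Both $\{\xi_n=\eta_n=1\}$ and $\{\xi_n=\eta_n=-1\}$ are contained in $\{Q_n=1\}$, so when $C\subseteq\{Q_n=0\}$ the two measures both assign $0$ to $A\cap C$, and when $C\supseteq\{Q_n=1\}$ they assign $P(\{\xi_n=\eta_n=1\}\cap A)$ and $P(\{\xi_n=\eta_n=-1\}\cap A)$ respectively. Since $A\in\mathcal H_{n-1}\subseteq\mathcal F_{n-1}$ and $\vartheta_n$ is $\mathcal F_{n-1}$-measurable, conditioning on $\mathcal F_{n-1}$ and using the definition of $\vartheta_n$ together with \eqref{increment probability3} rewrites both quantities as $E[1_A\vartheta_n]$, so they coincide; taking $A=C=\Omega$ also shows the two measures have equal total mass $E[\vartheta_n]$. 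By Dynkin's $\pi$--$\lambda$ theorem they then agree on all of $\mathcal H_{n-1}\vee\mathcal G_n$, which is exactly the first identity. The second identity is proved the same way, now using that $\{\xi_n=-\eta_n=1\}$ and $\{\xi_n=-\eta_n=-1\}$ lie in $\{Q_n=0\}$ and that \eqref{increment probability2} makes both conditional probabilities equal to $\tfrac12-\vartheta_n$.

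Conceptually this is just the sign-flip symmetry: the involution $(\xi_n,\eta_n)\mapsto(-\xi_n,-\eta_n)$ preserves the conditional law of $(\xi_n,\eta_n)$ given $\mathcal F_{n-1}$ by \eqref{increment probability2}--\eqref{increment probability3}, it fixes $Q_n$ (hence leaves $\mathcal H_{n-1}\vee\mathcal G_n$ invariant), and it interchanges $\{\xi_n=\eta_n=1\}$ with $\{\xi_n=\eta_n=-1\}$ and $\{\xi_n=-\eta_n=1\}$ with $\{\xi_n=-\eta_n=-1\}$; both identities drop out. I expect the only genuinely delicate point to be the first step: $\mathcal G_n$ carries the current bit $Q_n$, which is not controlled by $\mathcal F_{n-1}$, so one must notice that, because $\mathcal G_{n-1}\subseteq\mathcal H_{n-1}$, this is the \emph{only} new information $\mathcal G_n$ adds to $\mathcal H_{n-1}$, and it is precisely the coordinate left invariant by the flip; after that the computation is routine. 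Condition C1) is the standing hypothesis of Theorem \ref{XYT independent}; one may invoke it to replace $\mathcal G_n$ by $\mathcal G_\infty$ inside the conditioning and repeat the same argument, which is the form in which the lemma feeds into the proof of that theorem, but the two identities above rely only on \eqref{self-con-proba}.
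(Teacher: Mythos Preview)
Your argument is correct and essentially the same as the paper's: both identify the $\pi$-system $\{A\cap\{\Delta T_n=1\},\,A\cap\{\Delta T_n=0\}:A\in\mathcal H_{n-1}\}$ as generating $\mathcal H_{n-1}\vee\mathcal G_n$, check the identity on that system via $P(\xi_n=\eta_n=1\mid\mathcal F_{n-1})=P(\xi_n=\eta_n=-1\mid\mathcal F_{n-1})$, and conclude by the monotone class (Dynkin) theorem. Your additional remark that condition C1) plays no role in the lemma itself---only \eqref{self-con-proba} and \eqref{increment probability2}--\eqref{increment probability3} are used---is accurate and worth noting.
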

\begin{proof}
For any $A\in \mathcal H_{n-1}\subseteq \mathcal F_{n-1}$, it holds that
\begin{equation}\label{condition=}
 E\left[1_{\{\xi_n=\eta_n=1\}}1_A\right]
 =E\left[1_{\{\xi_n=\eta_n=-1\}}1_A\right].
\end{equation}

Let $\mathcal C=\Big\{\{\Delta T_n=1\big\}\cap B, \{\Delta T_n=0\}\cap B\Big|B\in\mathcal H_{n-1}\Big\}$. Since
$$\{\Delta T_n=1\big\}=\{\xi_n=\eta_n=1\big\}\bigcup \{\xi_n=\eta_n=-1\},$$
 (\ref{condition=}) holds for every $A\in \mathcal C$. And it is easy to check that $\mathcal C$ is a $\pi-$system and $\sigma (\mathcal C)=\mathcal H_{n-1}\bigvee \mathcal G_n$.

Now define $\mathcal L=\left\{A\in \mathcal H_{n-1}\bigvee \mathcal G_n \left|(\ref{condition=})\ \mbox{holds for}\ A\right.\right\}$, then $\mathcal L$ is a $\lambda-$system containing $\mathcal C$. By \emph{Monotone Classes Theorem}, $\mathcal H_{n-1}\bigvee \mathcal G_n=\sigma (\mathcal C)\subseteq \mathcal L$. Accordingly, $\mathcal L=\mathcal H_{n-1}\bigvee \mathcal G_n$. Similar arguments apply to the second equality and the lemma follows.
\end{proof}

Now we finish the proof of Theorem \ref{XYT independent}

\begin{proofofof}{\ref{XYT independent}}
First note that the independence of $X$, $Y$ and $T$ is equivalent to
$$P(\Delta X_{n_1}=a_1,\cdots,\Delta X_{n_k}=a_k,\Delta Y_{m_1}=b_1,\cdots,\Delta Y_{m_l}=b_l|\mathcal{G}_{\infty})=\frac1{2^{k+l}},$$
for any $n_i,m_j \in \mathbb N$ and $a_i,b_j\in\{1,-1\},i=1,2,\cdots,k,j=1,2,\cdots,l$,
which can be rewritten as
\begin{equation}\label{equiv inde}
P\left(\left.\tilde\xi_{\alpha_{n_i}}=a_i,\tilde\xi_{\beta_{m_j}}=b_j;i=1,2,\cdots,k;j=1,2,\cdots,l
\right|\mathcal{G}_{\infty}\right)=\frac1{2^{k+l}}.
\end{equation}

For the "if" part, consider a possible value set for the $\alpha_{n_i}s$ and $\beta_{m_j}s$ in (\ref{equiv inde}):
$\{d_i,f_j\in \mathbb N, d_i\geq n_i,f_j\geq m_j,i=1,2,\cdots,k;j=1,2,\cdots,l\}$ and let $N=\max\{d_i,f_j,i=1,2,\cdots,k;j=1,2,\cdots,l\}$. Without loss of generality, we assume $d_1=N$. Because $N<\infty$, then
\begin{eqnarray}
&& P\left(\left.\tilde\xi_{\alpha_{n_i}}=a_i,\tilde\xi_{\beta_{m_j}}=b_j,
    \alpha_{n_i}=d_i, \beta_{m_j}=f_j,1\leq i\leq k, 1\leq j\leq l
\right|\mathcal{G}_{\infty}\right)\nonumber \\
&& =P\left(\left.\xi_{\alpha_{n_i}}=a_i,\xi_{\beta_{m_j}}=b_j,
    \alpha_{n_i}=d_i, \beta_{m_j}=f_j,1\leq i\leq k, 1\leq j\leq l
\right|\mathcal{G}_{\infty}\right)\nonumber \\
&& =P\left(\left.\xi_{d_i}=a_i,\xi_{f_j}=b_j,
    \alpha_{n_i}=d_i, \beta_{m_j}=f_j, 2\leq i\leq k, 1\leq j\leq l
; T_{N-1}=n_1-1, \xi_N=\eta_N=a_1
\right|\mathcal{G}_{\infty}\right)\nonumber\\
&& =1_{\{T_{N-1}=n_1-1\}}P\left(\left. \xi_{d_i}=a_i,\xi_{f_j}=b_j,
    \alpha_{n_i}=d_i, \beta_{m_j}=f_j, 2\leq i\leq k, 1\leq j\leq l; \xi_N=\eta_N=a_1
\right|\mathcal{G}_{N}\right)\nonumber\\
&&= 1_{\{T_{N-1}=n_1-1\}}E\left[\left.1_{\{\xi_{d_i}=a_i,\xi_{f_j}=b_j,
    \alpha_{n_i}=d_i, \beta_{m_j}=f_j, 2\leq i\leq k, 1\leq j\leq l\}}P\left(\xi_N=\eta_N=a_1
|\mathcal H_{N-1}\vee \mathcal{G}_{N}\right)\right|\mathcal{G}_{N}\right].\label{TN}
\end{eqnarray}
The third equality is from a equivalent condition of the condition C1), see \cite[Chapter 6]{bielecki2013credit}. 
Applying Lemma \ref{lemma1-2}, from \eqref{TN} we have that
\begin{eqnarray*}
&&P\left(\left.\xi_{\alpha_{n_i}}=a_i,\xi_{\beta_{m_j}}=b_j,
    \alpha_{n_i}=d_i, \beta_{m_j}=f_j,1\leq i\leq k, 1\leq j\leq l
\right|\mathcal{G}_{\infty}\right)\nonumber \\
&=&\frac12 1_{\{T_{N-1}=n_1-1\}}1_{\{\Delta T_{N}=1\}}P\left(\left.\xi_{d_i}=a_i,\xi_{f_j}=b_j,
    \alpha_{n_i}=d_i, \beta_{m_j}=f_j,2\leq i\leq k, 1\leq j\leq l\right|\mathcal{G}_{N}\right)\\
&=&\frac12 1_{\{\alpha_{n_1}=N\}}P\left(\left.\xi_{d_i}=a_i,\xi_{f_j}=b_j,
    \alpha_{n_i}=d_i, \beta_{m_j}=f_j,2\leq i\leq k, 1\leq j\leq l\right|\mathcal{G}_{N}\right).
\end{eqnarray*}
Notice that $\mathcal G_N$ and $\mathcal H_n$ are conditionally independent given $\mathcal G_n$ for any $n<N$. Then applying the same method to the second largest value of $\{d_i,f_j,i=1,\cdots,k;j=1,\cdots, l\}$, and then to the third, then the forth,etc. Finally, we get
\begin{eqnarray*}
&&P\left(\left.\tilde\xi_{\alpha_{n_i}}=a_i,\tilde\xi_{\beta_{m_j}}=b_j,
    \alpha_{n_i}=d_i, \beta_{m_j}=f_j;i=1,2,\cdots,k;j=1,2,\cdots,l
\right|\mathcal{G}_{\infty}\right)\\
&=&\frac{1}{2^{k+l}}1_{\{\alpha_{n_i}=d_i, \beta_{m_j}=f_j;1\leq i\leq k, 1\leq j\leq l\}}.
\end{eqnarray*}
Taking $\{d_i,f_j,i=1,2,\cdots,k;j=1,2,\cdots,l\}$ over all possible finite values and summing up the results, we have
\begin{eqnarray*}
&&P\left(\left.\tilde\xi_{\alpha_{n_i}}=a_i,\tilde\xi_{\beta_{m_j}}=b_j;
\alpha_{n_i}<\infty, \beta_{m_j}<\infty, 1\leq i\leq k, 1\leq j\leq l
\right|\mathcal{G}_{\infty}\right)\\
&=& \frac{1}{2^{k+l}}1_{\{\alpha_{n_i}<\infty, \beta_{m_j}<\infty,1\leq i\leq k, 1\leq j\leq l\}}.
\end{eqnarray*}
If some of $\{\alpha_{n_i}, \beta_{m_j},i=1,2,\cdots,k;j=1,2,\cdots,l\}$
are valued $\infty$, by independence of $\{\zeta_n,n\geq1\},\{\psi_n,n\geq1\}$ and $\mathcal{F}_{\infty}$, we still have similar results.
Thus (\ref{equiv inde}) holds and then $X$,$Y$ and $T$ are independent.

For the "only if" part, when $X,Y,T$ are mutually independent,
\begin{align*}&P(X_{T_1}\in C_1,Y_{S_1}\in D_1,\cdots,X_{T_n}\in C_n,Y_{S_n}\in D_n|\mathcal{G}_{\infty})\\
=&P(X_{T_1}\in C_1,Y_{S_1}\in D_1,\cdots,X_{T_n}\in C_n,Y_{S_n}\in D_n|T_1,\cdots,T_n)\\
=&P(X_{T_1}\in C_1,Y_{S_1}\in D_1,\cdots,X_{T_n}\in C_n,Y_{S_n}\in D_n|\mathcal{G}_n)
\end{align*}
for any $C_i, D_i\in \mathcal B(R)$. Let
$$\mathcal{P}=\big\{\{X_{T_1}\in C_1,Y_{S_1}\in D_1,\cdots,X_{T_n}\in C_n,Y_{S_n}\in D_n\}: C_i,D_i\in\mathcal{B}(R),1\le i\le n\big\},$$ and $\mathcal Q=\big\{A\in \mathcal H_n: P\left(A|\mathcal G_\infty\right)=P\left(A|\mathcal G_n\right)\big\},$ then $\mathcal{P}$ is a $\pi$-system, $\mathcal{Q}$ is a $\lambda$-system, $\mathcal P\subseteq\mathcal Q\subseteq \mathcal H_n$, and $\mathcal{H}_n=\sigma(\mathcal{P})$. In consequence, $\mathcal Q=\mathcal H_n$. And thus $\mathcal{H}_n$ and $\mathcal G_{\infty}$ are conditionally independent given $\mathcal{G}_n$.
\end{proofofof}

\begin{rem}
In fact, the condition C1) is equivalent to the condition C2):
\begin{description}
  \item[C2)] There exists sub filtrations of $\mathbb F$, $\mathcal G$ and $\mathcal H$, that $\sigma(T_k,k\le n)\subseteq\mathcal{G}_n$, $\sigma(B_k,W_k,k\le n)\subseteq\mathcal{H}_n$ and $\mathcal{H}_n$ and $\mathcal{G}_{\infty}$ are independent under the condition $\mathcal{G}_n$. Furthermore, $$P(\xi_n=\eta_n=1|\mathcal{G}_n\bigvee\mathcal{H}_{n-1})=P(\xi_n=\eta_n=-1|\mathcal{G}_n\bigvee\mathcal{H}_{n-1}), $$ $$P(\xi_n=-\eta_n=1|\mathcal{G}_n\bigvee\mathcal{H}_{n-1})=P(\xi_n=-\eta_n=-1|\mathcal{G}_n\bigvee\mathcal{H}_{n-1}).$$
\end{description}

In some cases, the condition C2) is much easier to check than the condition C1).
\end{rem}

\subsection{An Example from Finance}
In finance, when applying the Bachelier asset-price model $P_t=\sigma Z_t$ or the Black-Scholse-Merton model \cite{black1973pricing} (under risk-neutral probability) $dP_t/P_t=rdt+\sigma dZ_t$, where $Z$ stands for a standard Brownian motion, correlation between asset prices is described by correlation of Brownian motions. In either case, for $t>s,$ $\{Z_t-Z_s>0\}=\{P_{t}>P_{s}\}$, i.e., the asset price goes strictly up in the time interval $(s,t]$. Hence the decomposition focuses on common and counter movements of two discretized price sequences.

\begin{example}
Suppose $(Z_t^1,Z_t^2)$ is a 2-dimensional Brownian motion with correlation coefficient $\rho$. Given $0<t_1<\cdots<t_n<\cdots$, let
$\xi_n\triangleq1_{\{\Delta Z_{t_n}^1>0\}}-1_{\{\Delta Z_{t_n}^1\leq0\}}$, $\eta_n\triangleq1_{\{\Delta Z_{t_n}^2>0\}}-1_{\{\Delta Z_{t_n}^2\leq0\}}$ and $\mathcal F_n\triangleq\sigma(Z^1_{t_k},Z^2_{t_k},k\leq n)$. Then by basic properties of Brownian motion, we get
$$P(\xi_n=1|\mathcal F_{n-1})=P(\Delta Z_{t_n}^1>0|\mathcal F_{n-1})=P(\Delta Z_{t_n}^1>0)=\frac12.$$
Similarly, $$P(\xi_n=-1|\mathcal F_{n-1})=P(\eta_n=1|\mathcal F_{n-1})=P(\eta_n=-1|\mathcal F_{n-1})=\frac12.$$
In this case, $$P(\xi_n=1,\eta_n=1|\mathcal{F}_{n-1})
=P(\Delta Z_{t_n}^1>0,\Delta Z_{t_n}^2>0|\mathcal{F}_{n-1})=\Phi(0,0;\rho),$$
 where $\Phi(x,y;\rho)$ is the c.d.f of a standard 2-dimensional normal distribution with correlation coefficient $\rho$. Note that the condition C1) is satisfied, thus $X$, $Y$ and $T$ are independent random processes, with $P(\Delta X_n=1)=P(\Delta Y_n=1)=P(\Delta X_n=-1)=P(\Delta Y_n=-1)=\frac12$,$P(\Delta T_n=1)=2\Phi(0,0;\rho)$ and $P(\Delta T_n=0)=1-2\Phi(0,0;\rho)$.

 Consider a simple case that there are only these two assets in the market. Then $X$ reflects the trend of market or systematic risk, $Y$ reflects the specific risk and $T$ represents how much time the asset price goes up and down along the market trend. For example, if we deduce from the recent market data $X > 0,Y > 0,T > S$, then we may conclude that the recent market seems more likely in an increasing trend; and if $T<S$, it seems the first stock is more likely to increase.

\end{example}

\section{Conclusion}

In this paper, we characterize two correlated random walks $B$ and $W$ by $X,Y$ and $T$, where
 $X$ shows the common movements of $B$ and $W$; $Y$ shows its counter movements; and $T_n$ represents the number of common movements till step $n$.
 Under some conditions, we prove that $X$ and $Y$ are two independent random walks. Consequently, $T$ contains all the dependency structure information of $B$ and $W$. We also provide a sufficient and necessary condition for $X,Y$ and $T$ to be mutually independent. 

\section*{Acknowledgement}
Chen and Yang's research was supported by the National Natural Science Foundation of China
(Grants No. 11671021), and Cheng's research was supported by the National Natural Science Foundation of China
(Grants No. 11601018).

\bibliography{reference}

\end{document}